\newtheorem{theorem}{Theorem}
\newtheorem{cor}[theorem]{Corollary}
\newtheorem{prop}[theorem]{Proposition}
\newcommand{\pk}{{\rm pk\,}}
\newcommand{\des}{{\rm des\,}}
\newcommand{\altdes}{{\rm altdes\,}}
\newcommand{\msn}{\mathfrak{S}_n}
\newcommand{\msnn}{\mathfrak{S}_{n+1}}
\newcommand{\lrf}[1]{\lfloor #1\rfloor}
\title{Derivative polynomials and enumeration of permutations by their alternating descents}
\author[S.-M.~Ma]{Shi-Mei Ma}
\address{School of Mathematics and Statistics,
        Northeastern University at Qinhuangdao,
         Hebei 066004, P.R. China}
\email{shimeimapapers@gmail.com (S.-M. Ma)}
\author[Y.-N. Yeh]{Yeong-Nan Yeh}
\address{Institute of Mathematics,
        Academia Sinica, Taipei, Taiwan}
\email{mayeh@math.sinica.edu.tw (Y.-N. Yeh)}
\subjclass[2010]{Primary 05A15; Secondary 26C10}
\begin{document}
\begin{abstract}
In this paper we present an explicit formula for the number of permutations with a given number of alternating descents.
Moreover, we study the interlacing property of the real parts of the zeros of the generating polynomials of these numbers.
\end{abstract}

\keywords{Alternating Eulerian polynomials; Derivative polynomials; Zeros}

\maketitle

\section{Introduction}
Let $\msn$ denote the symmetric group of all permutations of $[n]$, where $[n]=\{1,2,\ldots,n\}$.
For a permutation $\pi\in\msn$, we define a {\it descent} to be a position $i$ such that $\pi(i)>\pi(i+1)$. Denote by $\des(\pi)$ the number of descents of $\pi$.
The classical Eulerian polynomials $A_n(x)$ are defined by
\begin{equation*}
A_n(x)=\sum_{\pi\in\msn}x^{\des(\pi)}=\sum_{k=0}^{n-1}A(n,k)x^{k}.
\end{equation*}
As a variation of the descent statistic,
the number of {\it alternating descents} of a permutation $\pi\in\msn$ is defined by
$$\altdes(\pi)=|\{2i: \pi(2i)<\pi(2i+1)\}\cup \{2i+1: \pi(2i+1)>\pi(2i+2)\}|.$$
We say that $\pi$ has a {\it 3-descent} at index $i$ if $\pi(i)\pi(i+1)\pi(i+2)$ has one of the patterns: $132$, $213$, or $321$. Chebikin~\cite{Chebikin08} showed that
the alternating descent statistic of permutations in $\msn$ is equidistributed
with the 3-descent statistic of permutations in $\{\pi\in\msnn : \pi_1=1\}$.
Then the equations
$$\widehat{A}_n(x)=\sum_{\pi\in\msn}x^{\altdes(\pi)}=\sum_{k=0}^{n-1}\widehat{A}(n,k)x^k$$
define the {\it alternating Eulerian polynomials} $\widehat{A}_n(x)$ and the {\it alternating Eulerian numbers} $\widehat{A}(n,k)$.
The first few $\widehat{A}_n(x)$ are given as follows:
\begin{align*}
   \widehat{A}_1(x)& =1,\\
  \widehat{A}_2(x)&=1+x, \\
  \widehat{A}_3(x)&=2+2x+2x^2, \\
  \widehat{A}_4(x)& =5+7x+7x^2+5x^3,\\
  \widehat{A}_5(x)&=16+26x+36x^2+26x^3+16x^4.
\end{align*}

Chebikin~\cite{Chebikin08} proved that
\begin{equation}\label{EGF-Anx}
\sum_{n\geq 1}\widehat{A}_{n}(x)\frac{z^n}{n!}=\frac{\sec(1-x)z+\tan(1-x)z)-1}{1-x(\sec(1-x)z+\tan(1-x)z)},
\end{equation}
and the numbers $\widehat{A}(n,k)$ satisfy
the recurrence relation
\begin{equation*}\label{Recu-Ank}
\sum_{i=0}^n\sum_{j=0}^k\binom{n}{i}\widehat{A}(i,j+1)\widehat{A}(n-i,k-j+1)=(n+1-k)\widehat{A}(n,k+1)+(k+1)\widehat{A}(n,k+2).
\end{equation*}

In recent years, several authors pay attention to the alternating descent statistic and its associated permutation statistics.
The reader is referred to~\cite{Gessel14,Remmel12} for recent progress on this subject. For example, Gessel and Zhuang~\cite{Gessel14} defined an alternating run to be a maximal consecutive subsequence with no alternating descents.
The purpose of this paper is to present an explicit formula for
the numbers $\widehat{A}(n,k)$.
In Section~\ref{sec:Explicit}, we express the polynomials $\widehat{A}_n(x)$ in terms of the {\it derivative polynomials} $P_n(x)$
defined by Hoffman~\cite{Hoffman95}:
$$P_n(\tan \theta)=\frac{d^n}{d\theta^n}\tan\theta.$$

\section{An explicit formula}\label{sec:Explicit}
Let $D$ denote the differential operator $d/{d\theta}$. Set $x=\tan\theta$. Then $D(x^n)=nx^{n-1}(1+x^2)$ for $n\geq 1$. Thus $D^n(x)$ is a polynomial in $x$. Let
$P_n(x)=D^n(x)$.
Then $P_0(x)=x$ and
\begin{equation}\label{Pnx-recu}
P_{n+1}(x)=(1+x^2)P_n'(x).
\end{equation}
Clearly, $\deg P_n(x)=n+1$.
By definition, we have
\begin{equation}\label{EGF-Pnx}
\tan(\theta+z)=\sum_{n\geq 0}P_n(x)\frac{z^n}{n!}=\frac{x+\tan z}{1-x \tan z},
\end{equation}
Let $P_n(x)=\sum_{k=0}^{n+1}p(n,k)x^k$. It is easy to verify that
\begin{equation*}
p(n,k)=(k+1)p(n-1,k+1)+(k-1)p(n-1,k-1).
\end{equation*}
The first few terms can be computed directly as follows:
\begin{align*}
  P_1(x)& =1+x^2, \\
  P_2(x)& =2x+2x^3, \\
  P_3(x)& =2+8x^2+6x^4,\\
  P_4(x)& =16x+40x^3+24x^5.
\end{align*}
Note that $P_n(-x)=(-1)^{n+1}P_n(x)$ and $x\| P_{2n}(x)$.
Thus we have the following expression:
\begin{equation*}\label{pnk}
P_n(x)=\sum_{k=0}^{\lfloor(n+1)/2\rfloor}p(n,n-2k+1)x^{n-2k+1}.
\end{equation*}

There is an explicit formula for the numbers $p(n,n-2k+1)$.
\begin{prop}[{\cite[Proposition 1]{Ma122}}]
For $n\geq 1$ and $0\leq k\leq {\lfloor(n+1)/2\rfloor}$, we have
\begin{equation*}
p(n,n-2k+1)=(-1)^{k}\sum_{i\geq 1}i!{n \brace i}(-2)^{n-i}\left[\binom{i}{n-2k}-\binom{i}{n-2k+1}\right].
\end{equation*}
\end{prop}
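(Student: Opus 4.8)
The plan is to realize $D=d/d\theta$ as an Euler operator in an exponential variable, where the Stirling numbers ${n\brace i}$ arise automatically from the expansion of its powers, and then to read off $p(n,n-2k+1)$ by the binomial theorem. Write $\mathrm{i}=\sqrt{-1}$ and set $q=e^{2\mathrm{i}\theta}$, so that
\[
x=\tan\theta=-\mathrm{i}\,\frac{q-1}{q+1}=-\mathrm{i}+\frac{2\mathrm{i}}{q+1},\qquad D=2\mathrm{i}\,q\,\frac{d}{dq}.
\]
Inverting gives $q=(1+\mathrm{i}x)/(1-\mathrm{i}x)$, hence $q/(q+1)=(1+\mathrm{i}x)/2$ and $1/(q+1)=(1-\mathrm{i}x)/2$; these are the only facts about $q$ that will be needed at the end.

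Next I would invoke the classical operator identity $(q\,d/dq)^n=\sum_{i\geq 0}{n\brace i}q^i(d/dq)^i$, which yields $D^n=(2\mathrm{i})^n\sum_{i\geq 1}{n\brace i}q^i(d/dq)^i$ for $n\geq 1$ (the term $i=0$ drops out since ${n\brace 0}=0$). Applying this to $x=-\mathrm{i}+2\mathrm{i}/(q+1)$, the constant $-\mathrm{i}$ is annihilated and $(d/dq)^i(q+1)^{-1}=(-1)^i i!\,(q+1)^{-i-1}$, so
\[
P_n(x)=(2\mathrm{i})^n\cdot 2\mathrm{i}\sum_{i\geq 1}(-1)^i i!\,{n\brace i}\,\frac{q^i}{(q+1)^{i+1}}.
\]
Substituting $q^i/(q+1)^{i+1}=2^{-i-1}(1+\mathrm{i}x)^i(1-\mathrm{i}x)$ and collecting the powers of $2$ and $\mathrm{i}$ gives the closed form
\[
P_n(x)=\mathrm{i}^{n+1}\sum_{i\geq 1}(-1)^i i!\,{n\brace i}\,2^{n-i}(1+\mathrm{i}x)^i(1-\mathrm{i}x).
\]

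Finally I would extract $[x^{n-2k+1}]$. From $(1-\mathrm{i}x)(1+\mathrm{i}x)^i=(1+\mathrm{i}x)^i-\mathrm{i}x(1+\mathrm{i}x)^i$ one gets, with $j=n-2k+1$, that $[x^{j}]\big((1-\mathrm{i}x)(1+\mathrm{i}x)^i\big)=\mathrm{i}^{j}\big(\binom{i}{j}-\binom{i}{j-1}\big)$. It remains to simplify the scalar prefactor: rewriting $(-1)^i2^{n-i}=(-1)^n(-2)^{n-i}$ and using $\mathrm{i}^{n+1+j}=\mathrm{i}^{2(n-k+1)}=(-1)^{n-k+1}$ collapses the power of $\mathrm{i}$ to the real sign $(-1)^{n-k+1}(-1)^n=(-1)^{k+1}$; flipping the bracket to $\binom{i}{n-2k}-\binom{i}{n-2k+1}$ absorbs one more sign and produces exactly $(-1)^k$, which is the asserted formula.

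The main obstacle is the sign and parity bookkeeping: the computation a priori takes place in $\mathbb{C}[x]$, so one must verify that the imaginary parts genuinely cancel and that the stray powers of $\mathrm{i}$ consolidate into the real factor $(-1)^k$, which works precisely because $p(n,j)$ vanishes unless $j\equiv n+1\pmod 2$. An alternative is induction on $n$ against the recurrence $p(n,k)=(k+1)p(n-1,k+1)+(k-1)p(n-1,k-1)$, feeding in ${n\brace i}=i{n-1\brace i}+{n-1\brace i-1}$ and Pascal's rule; that route avoids complex numbers but requires a more delicate telescoping of the binomial differences, so I would prefer the operator computation above.
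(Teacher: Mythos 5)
Your derivation is correct, and I verified it end to end: the conjugation $q=e^{2\mathrm{i}\theta}$ turns $D$ into $2\mathrm{i}\,q\,d/dq$, the identity $(q\,d/dq)^n=\sum_i{n\brace i}q^i(d/dq)^i$ produces the Stirling numbers, and the substitution $q/(q+1)=(1+\mathrm{i}x)/2$, $1/(q+1)=(1-\mathrm{i}x)/2$ yields
\begin{equation*}
P_n(x)=\mathrm{i}^{n+1}\sum_{i\geq 1}(-1)^i\, i!\,{n\brace i}\,2^{n-i}(1+\mathrm{i}x)^i(1-\mathrm{i}x),
\end{equation*}
from which the coefficient extraction and the sign bookkeeping go through exactly as you say (I checked the final constant $(-1)^k$ against $n=1$ and $n=3$). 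Note, however, that the paper does not prove this proposition at all: it is quoted verbatim from \cite[Proposition 1]{Ma122}, so there is no in-paper argument to compare against. Your operator computation is the standard route by which Stirling numbers of the second kind enter the derivative polynomials (and is essentially the derivation in the cited source), so your proof should be regarded as a correct, self-contained reconstruction rather than a genuinely new approach. Your worry about stray imaginary parts is not a real gap: for $j=n-2k+1$ the exponent $n+1+j$ is even, so the prefactor $\mathrm{i}^{n+1+j}$ is already real, and for $j\not\equiv n+1\pmod 2$ the real sum itself vanishes, consistently with $P_n(-x)=(-1)^{n+1}P_n(x)$.
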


Now we present the first main result of this paper.
\begin{theorem}
For $n\geq 1$, we have
\begin{equation}\label{AnxPnx:01}
2^n(1+x^2)\widehat{A}_n(x)=(1-x)^{n+1}P_n\left(\frac{1+x}{1-x}\right).
\end{equation}
\end{theorem}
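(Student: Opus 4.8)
The plan is to prove the identity at the level of exponential generating functions in an auxiliary variable $z$, treating $x$ as a parameter throughout, and then to extract the coefficient of $z^n/n!$. Write $Q_n(x)=(1-x)^{n+1}P_n\!\left(\frac{1+x}{1-x}\right)$ for the right-hand side of \eqref{AnxPnx:01}. I will compute $\sum_{n\ge0}Q_n(x)\frac{z^n}{n!}$ in closed form, as a formal power series in $z$ with coefficients rational in $x$, and compare it with the generating function of $2^n(1+x^2)\widehat{A}_n(x)$.

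First I would evaluate the EGF of $Q_n$. Setting $u=\frac{1+x}{1-x}$ and pulling out one factor of $(1-x)$, one has $\sum_{n\ge0}Q_n(x)\frac{z^n}{n!}=(1-x)\sum_{n\ge0}P_n(u)\frac{((1-x)z)^n}{n!}$. By \eqref{EGF-Pnx} with $w=(1-x)z$ this inner sum equals $\frac{u+\tan w}{1-u\tan w}$, which is legitimate as a formal series since $\tan w$ has zero constant term. Substituting $u=\frac{1+x}{1-x}$ and clearing the inner fraction gives, with $t:=\tan((1-x)z)$, the compact form $(1-x)\frac{(1+x)+(1-x)t}{(1-x)-(1+x)t}$.

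Next I would handle the left-hand side. Since $2^n\widehat{A}_n(x)\frac{z^n}{n!}=\widehat{A}_n(x)\frac{(2z)^n}{n!}$, equation \eqref{EGF-Anx} evaluated at $2z$ expresses $\sum_{n\ge1}2^n\widehat{A}_n(x)\frac{z^n}{n!}$ in terms of $s:=\sec(2(1-x)z)$ and $t':=\tan(2(1-x)z)$. The crucial simplification is the classical identity $\sec(2\alpha)+\tan(2\alpha)=\frac{1+\tan\alpha}{1-\tan\alpha}$ with $\alpha=(1-x)z$, which turns $s+t'$ into $\frac{1+t}{1-t}$ and thereby collapses the double-angle expression back to the single variable $t$. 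After multiplying by $(1+x^2)$ and simplifying, the left-hand EGF becomes $\frac{2(1+x^2)t}{(1-x)-(1+x)t}$.

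It remains to reconcile the two closed forms, and I expect the only genuine bookkeeping to lie in the constant term, which is exactly why the statement is restricted to $n\ge1$. The EGF of $Q_n$ includes the $n=0$ contribution $Q_0(x)=(1-x)P_0\!\left(\frac{1+x}{1-x}\right)=1+x$, whereas \eqref{EGF-Anx} begins at $n=1$; subtracting $1+x$ from the $Q$-side and putting everything over the common denominator $(1-x)-(1+x)t$ yields numerator $[(1-x)^2+(1+x)^2]\,t=2(1+x^2)t$, matching the left-hand form exactly. Comparing coefficients of $z^n/n!$ for $n\ge1$ then gives \eqref{AnxPnx:01}. The main obstacle is not any single computation but keeping the two trigonometric parametrizations aligned: the factor $2^n$ forces the argument $2(1-x)z$ on the Eulerian side while the derivative-polynomial side carries $(1-x)z$, so the double-angle identity is precisely the bridge between them; once this is recognized, the remaining algebra is routine.
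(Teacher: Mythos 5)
Your proposal is correct and follows essentially the same route as the paper: both compute the exponential generating function of $(1-x)^{n+1}P_n\bigl(\tfrac{1+x}{1-x}\bigr)$ via \eqref{EGF-Pnx}, evaluate \eqref{EGF-Anx} at the doubled argument, and reduce the comparison to the identity $\sec 2\alpha+\tan 2\alpha=\frac{1+\tan\alpha}{1-\tan\alpha}$ (the paper phrases this as the tangent half-angle substitution). Your explicit handling of the $n=0$ term is a minor tidiness improvement over the paper's presentation, but the argument is the same.
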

\begin{proof}
It follows from~\eqref{EGF-Pnx} that
\begin{align*}
  \sum_{n\geq 1} (1-x)^{n+1}P_n\left(\frac{1+x}{1-x}\right)\frac{z^n}{n!}&=(1-x)\sum_{n\geq 1}P_n\left(\frac{1+x}{1-x}\right)\frac{(z-xz)^n}{n!} \\
&=(1+x^2)\frac{2\tan(z-xz)}{1-x-(1+x)\tan(z-xz)}.
\end{align*}

Comparing with~\eqref{EGF-Anx}, it suffices to show the following
\begin{equation}\label{identity}
\frac{\sec(2z-2xz)+\tan(2z-2xz)-1}{1-x(\sec(2z-2xz)+\tan(2z-2xz))}=\frac{2\tan(z-xz)}{1-x-(1+x)\tan(z-xz)}.
\end{equation}

Set $t=\tan(z-xz)$. Using the tangent half-angle substitution,
we have $$\sec(2z-2xz)=\frac{1+t^2}{1-t^2},~\tan(2z-2xz)=\frac{2t}{1-t^2}.$$
Then the left hand side of~\eqref{identity} equals
$$\frac{2t(1+t)}{1-t^2-x(1+t)^2}=\frac{2t}{1-x-(1+x)t}.$$
This completes the proof.
\end{proof}

It follows from~\eqref{Pnx-recu} that
\begin{equation}\label{exp:01}
2^{n}\widehat{A}_{n+1}(x)=(1-x)^nP_n'\left(\frac{1+x}{1-x}\right)=\sum_{k=0}^{\lfloor(n+1)/2\rfloor}(n-2k+1)p(n,n-2k+1)(1-x)^{2k}(1+x)^{n-2k}.
\end{equation}

Denote
by $E(n,k,s)$ the coefficients $x^s$ of $(1-x)^{2k}(1+x)^{n-2k}$. Clearly,
$$E(n,k,s)=\sum_{j=0}^{\min (\lrf{\frac{k}{2}},s)}(-1)^j\binom{2k}{j}\binom{n-2k}{s-j}.$$
Then we get the following result.
\begin{cor}
For $n\geq 2$ and $1\leq s\leq n$, we have
\begin{equation*}
\widehat{A}(n+1,s)=\frac{1}{2^{n}}\sum_{k=0}^{\lfloor(n+1)/2\rfloor}(n-2k+1)p(n,n-2k+1)E(n,k,s).
\end{equation*}
\end{cor}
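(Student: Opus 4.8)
The plan is to obtain the formula by simply extracting the coefficient of $x^s$ from both sides of the identity \eqref{exp:01}, which has already been established from the recurrence \eqref{Pnx-recu} and the Theorem. Writing $\widehat{A}_{n+1}(x)=\sum_{s}\widehat{A}(n+1,s)x^s$, the coefficient of $x^s$ on the left-hand side of \eqref{exp:01} is exactly $2^n\widehat{A}(n+1,s)$. So the entire content of the Corollary is the claim that reading off the $x^s$-coefficient on the right-hand side produces $\sum_{k}(n-2k+1)p(n,n-2k+1)E(n,k,s)$.

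For the right-hand side, I would argue term by term in $k$: the factor $(n-2k+1)p(n,n-2k+1)$ does not involve $x$, so the coefficient of $x^s$ in the $k$-th summand is $(n-2k+1)p(n,n-2k+1)$ times the coefficient of $x^s$ in $(1-x)^{2k}(1+x)^{n-2k}$. By the very definition of $E(n,k,s)$ given just before the statement, that inner coefficient is $E(n,k,s)$. Summing over $k$ from $0$ to $\lfloor(n+1)/2\rfloor$ and then dividing by $2^n$ yields the asserted identity. The explicit binomial expression for $E(n,k,s)$ itself comes from multiplying the two binomial expansions $(1-x)^{2k}=\sum_j(-1)^j\binom{2k}{j}x^j$ and $(1+x)^{n-2k}=\sum_i\binom{n-2k}{i}x^i$ and collecting terms with $i+j=s$; this is a routine Cauchy-product computation that I would not belabor.

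There is no real obstacle here: the Corollary is a direct specialization of \eqref{exp:01} by coefficient comparison, and every ingredient (the closed form for $p(n,n-2k+1)$, the polynomial identity \eqref{exp:01}, and the definition of $E(n,k,s)$) is already in place. The only points worth a sentence of care are bookkeeping ones: confirming that the summation range $0\le k\le\lfloor(n+1)/2\rfloor$ matches \eqref{exp:01} exactly, and noting that the stated range $1\le s\le n$ is consistent with $\deg\widehat{A}_{n+1}(x)=n$, so that all nonconstant coefficients are captured. Since all the analytic work was done in proving the Theorem and deriving \eqref{exp:01}, the proof amounts to little more than invoking linearity of coefficient extraction.
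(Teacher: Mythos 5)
Your proof is correct and is exactly the argument the paper intends: the Corollary is stated as an immediate consequence of \eqref{exp:01} and the definition of $E(n,k,s)$, and your coefficient-of-$x^s$ extraction is precisely that step. (Incidentally, your Cauchy-product computation gives the upper limit $\min(2k,s)$ for the inner sum, which suggests the paper's printed bound $\min(\lfloor k/2\rfloor,s)$ in the displayed formula for $E(n,k,s)$ is a typo; this does not affect the Corollary as stated, since $E(n,k,s)$ is defined as the coefficient itself.)
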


Let $\pi=\pi(1)\pi(2)\cdots \pi(n)\in\msn$.
An {\it interior peak} in $\pi$ is an index $i\in\{2,3,\ldots,n-1\}$ such that $\pi(i-1)<\pi(i)>\pi(i+1)$.
Let $\pk(\pi)$ denote the number of interior peaks of $\pi$. Let $W_n(x)=\sum_{\pi\in\msn}x^{\pk(\pi)}$.
It is well known that the polynomials $W_n(x)$ satisfy the recurrence relation
\begin{equation*}
W_{n+1}(x)=(nx-x+2)W_n(x)+2x(1-x)W'_n(x),
\end{equation*}
with initial conditions $W_1(x)=1,W_2(x)=2$ and $W_3(x)=4+2x$. 
By the theory of {\it enriched P-partitions}, Stembridge~\cite[Remark 4.8]{Stembridge97} showed that
\begin{equation}\label{Stembridge}
W_n\left(\frac{4x}{(1+x)^2}\right)=\frac{2^{n-1}}{(1+x)^{n-1}}A_n(x).
\end{equation}

From~\cite[Theorem 2]{Ma121}, we have
\begin{equation}\label{Wnx-Pnx}
P_n(x)=x^{n-1}(1+x^2)W_n(1+x^{-2}).
\end{equation}
Therefore, combining~\eqref{AnxPnx:01} and~\eqref{Wnx-Pnx}, we get the counterpart of~\eqref{Stembridge}:
\begin{equation*}\label{Anx-Wnx}
W_n\left(\frac{2+2x^2}{(1+x)^2}\right)=\frac{2^{n-1}}{(1+x)^{n-1}}\widehat{A}_n(x).
\end{equation*}

\section{Zeros of the alternating Eulerian polynomials}\label{sec:Zeros}
Combining~\eqref{Pnx-recu} and~\eqref{AnxPnx:01}, it is easy to derive
that
\begin{equation*}\label{recuAnx}
2\widehat{A}_{n+1}(x)=(1+n+2x+nx^2-x^2)\widehat{A}_{n}(x)+(1-x)(1+x^2)\widehat{A}_{n}'(x)
\end{equation*}
for $n\ge 1$.
The bijection $\pi\mapsto \pi^c$ on $\msn$ defined by $\pi^c(i)=n+1-\pi(i)$ shows that $\widehat{A}_{n}(x)$ is symmetric. Hence $(x+1)\|\widehat{A}_{2n}(x)$ for $n\geq 1$.
It is well known that the
classical Eulerian polynomials $A_n(x)$ have only real zeros, and the zeros of $A_n(x)$ separates that of $A_{n+1}(x)$ (see B\'ona~\cite[p.~24]{Bona12} for instance).
Now we present the corresponding result for $\widehat{A}_{n}(x)$.
\begin{theorem}\label{thm:02}
For $n\geq 1$, the zeros of $\widehat{A}_{2n+1}(x)$ and $\widehat{A}_{2n+2}(x)/(1+x)$ are imaginary with multiplicity 1, and
the moduli of all zeros of $\widehat{A}_{n}(x)$ are equal to 1. Furthermore,
the sequence of real parts of the zeros of $\widehat{A}_{n}(x)$ separates that of $\widehat{A}_{n+1}(x)$. More precisely, suppose that $\{r_j\pm \ell_j\mathrm i\}_{j=1}^{n-1}$ are all zeros of $\widehat{A}_{2n}(x)/(1+x)$, $\{s_j\pm t_j\mathrm i\}_{j=1}^n$ are all zeros of $\widehat{A}_{2n+1}(x)$ and $\{p_j\pm q_j\mathrm i\}_{j=1}^{n}$ are all zeros of $\widehat{A}_{2n+2}(x)/(1+x)$, where $-1<r_1<r_2<\cdots<r_{n-1}<0$,
$-1<s_1<s_2<\cdots<s_n<0$ and $-1<p_1<p_2<\cdots<p_{n}<0$.
Then we have
\begin{equation}\label{si01}
 -1<s_1<r_1<s_2<r_2<\cdots<r_{n-1}<s_n,
 \end{equation}
 \begin{equation}\label{si02}
 -1<s_1<p_1<s_2<p_2<\cdots<s_n<p_n.
\end{equation}
\end{theorem}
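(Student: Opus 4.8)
The plan is to push everything through identity~\eqref{AnxPnx:01} to the derivative polynomials $P_n$, whose zeros are far easier to control. Introduce the M\"obius map $\mu(x)=\frac{1+x}{1-x}$, with inverse $\mu^{-1}(w)=\frac{w-1}{w+1}$, and observe that for real $\beta$
\[
\mu^{-1}(\beta\mathrm i)=\frac{\beta^2-1}{\beta^2+1}+\frac{2\beta}{\beta^2+1}\,\mathrm i ,
\]
which has modulus $1$ and real part $\rho(\beta):=\frac{\beta^2-1}{\beta^2+1}$, a strictly increasing function of $\beta^2$ carrying $(0,1)$ onto $(-1,0)$. By~\eqref{AnxPnx:01} the zeros of $2^n(1+x^2)\widehat A_n(x)$ are precisely the $\mu^{-1}$-images of the $n+1$ zeros of $P_n$; since $\mu^{-1}(\pm\mathrm i)=\pm\mathrm i$ absorbs the factor $1+x^2$, and for even $n$ the zero $w=0$ of $P_n$ maps to $x=-1$ (the factor $1+x$), the remaining zeros of $P_n$ account exactly for the zeros of $\widehat A_n(x)$. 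Thus the theorem reduces to the following claim about $P_n$: all its zeros are purely imaginary and simple; apart from $\pm\mathrm i$ and (for even $n$) $0$, they are $\pm\beta\mathrm i$ with $0<\beta<1$; and the ordinates $\beta$ interlace correctly as $n$ grows. Granting this, the modulus-$1$ assertion, the simplicity and non-reality assertions, and --- via the increasing map $\rho$ --- the chains~\eqref{si01} and~\eqref{si02} all follow.

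To prove the claim I would pass to real polynomials. Using $P_n(-x)=(-1)^{n+1}P_n(x)$, the substitution $x=y\mathrm i$ yields the real polynomial $g_n(y):=(-\mathrm i)^{n+1}P_n(y\mathrm i)$, whose real zeros are exactly the $y$ with $y\mathrm i$ a zero of $P_n$, and whose degree is $n+1$. A short computation from~\eqref{Pnx-recu} gives the clean recurrence
\[
g_{n+1}(y)=(y^2-1)\,g_n'(y),\qquad g_1(y)=y^2-1 .
\]
Now induct: assuming $g_n$ has $n+1$ distinct real zeros in $[-1,1]$ with $\pm1$ the two extreme ones, Rolle's theorem places one zero of $g_n'$ strictly inside each of the $n$ gaps between consecutive zeros of $g_n$, hence $n$ simple real zeros of $g_n'$, all lying in $(-1,1)$ and interlacing those of $g_n$; multiplying by $y^2-1$ reinserts the simple zeros $\pm1$ (note $g_n'(\pm1)\neq0$, as the Rolle zeros are interior), so $g_{n+1}$ again has $n+2$ distinct real zeros in $[-1,1]$ with extremes $\pm1$. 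Since $\deg g_n=n+1$, these are all the zeros, so $P_n$ has only simple purely imaginary zeros, the non-fixed ordinates satisfy $0<\beta<1$, and we obtain a clean interlacing between $g_n$ and $g_{n+1}$.

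It remains to convert this $y$-interlacing into the precise chains~\eqref{si01},~\eqref{si02}, where the parity of $n$ must be tracked. For even $n$ the polynomial $g_n$ is odd (so $0$ is a zero, matching the factor $1+x$), and for odd $n$ it is even, so the two transitions differ. In the step $g_{2n}\to g_{2n+1}$, writing the positive zeros of $g_{2n}$ as $0<\delta_1<\cdots<\delta_{n-1}<1$, Rolle yields positive zeros $\eta_1<\cdots<\eta_n$ of $g_{2n+1}$ with $\eta_1<\delta_1<\eta_2<\cdots<\delta_{n-1}<\eta_n$; applying $\rho$ and identifying $r_j=\rho(\delta_j)$, $s_j=\rho(\eta_j)$ produces exactly~\eqref{si01}. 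In the step $g_{2n+1}\to g_{2n+2}$, writing the positive zeros of $g_{2n+1}$ as $\gamma_1<\cdots<\gamma_n$, Rolle yields positive zeros $\theta_1<\cdots<\theta_n$ of $g_{2n+2}$ with $\gamma_1<\theta_1<\gamma_2<\cdots<\gamma_n<\theta_n$, and $\rho$ turns this into~\eqref{si02}. The endpoints $-1$ and $0$ come from $\rho(0^+)=-1$ and $\rho(1^-)=0$, since the interior ordinates lie strictly in $(0,1)$.

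The main obstacle is precisely this last bookkeeping: keeping straight which zeros of $P_n$ are consumed by the discarded factors $1+x^2$ and $1+x$, verifying that the interior zeros never meet the fixed points $\pm1$ (so everything stays simple and strictly inside $(-1,1)$), and confirming that the alternating even/odd structure threads the zeros of $\widehat A_{2n+1}$ between those of $\widehat A_{2n}/(1+x)$ on one side and those of $\widehat A_{2n+2}/(1+x)$ on the other, in the exact directions demanded by~\eqref{si01} and~\eqref{si02}.
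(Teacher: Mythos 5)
Your argument is correct and follows essentially the same route as the paper: both proofs reduce, via \eqref{AnxPnx:01}, to the facts that the zeros of $P_n$ are simple, purely imaginary, lie on the segment $[-\mathrm i,\mathrm i]$, and interlace as $n$ grows, and then transfer these through the substitution $x\mapsto(1+x)/(1-x)$ --- your explicit computation of $\mu^{-1}(\beta\mathrm i)$ and of $\rho(\beta)=\frac{\beta^2-1}{\beta^2+1}$ is precisely the paper's formulas $s_j=-\frac{1-a_j}{1+a_j}$, $p_j=-\frac{1-b_j}{1+b_j}$ and its modulus-one verification. The only substantive difference is that you prove the required properties of $P_n$ from scratch by a Rolle's-theorem induction on $g_{n+1}(y)=(y^2-1)g_n'(y)$, whereas the paper imports them from \cite{Ma08} and \cite{Hetyei08}; since your $g_n$ coincides with the paper's $\widetilde P_n$ up to sign, this makes the argument self-contained but is not a different method.
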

\begin{proof}
Define $\widetilde{P}_n(x)=\mathrm i^{n-1}P_n(\mathrm i x)$. Then
\begin{equation*}\label{pnx-recu}
\widetilde{P}_{n+1}(x)=(1-x^2)\widetilde{P}'_n(x).
\end{equation*}
From~\cite[Theorem 2]{Ma08}, we get that the polynomials $\widetilde{P}_{n}(x)$ have only real zeros, belong to $[-1,1]$ and the sequence of zeros of $\widetilde{P}_{n}(x)$ separates that of $\widetilde{P}_{n+1}(x)$.
From~\cite[Corollary 8.7]{Hetyei08}, we see that the zeros of the derivative polynomials $P_n(x)$ are pure imaginary with multiplicity 1, belong to
the line segment $[-\mathrm i,\mathrm i]$. In particular, $(1+x^2)\|P_n(x)$. 
Therefore, the polynomials $P_{2n+1}(x)$ and $P_{2n+2}(x)$ have the following expressions: $$P_{2n+1}(x)=(1+x^2)\prod_{i=1}^n (x^2+a_i), P_{2n+2}(x)=x(1+x^2)\prod_{i=1}^n (x^2+b_i),$$
where
\begin{equation}\label{aibi}
0<a_1<b_1<a_2<b_2<\cdots<a_n<b_n<1.
\end{equation}
Using~\eqref{AnxPnx:01}, we get
\begin{align*}
2^{2n}\widehat{A}_{2n+1}(x)&=\prod_{i=1}^n((1+x)^2+a_i(1-x)^2),\\
2^{2n+1}\widehat{A}_{2n+2}(x)&=(1+x)\prod_{i=1}^n((1+x)^2+b_i(1-x)^2).
\end{align*}
Hence
\begin{align*}
2^{2n}\widehat{A}_{2n+1}(x)&=\prod_{i=1}^n(1+a_i)\left(x+\frac{1-a_i}{1+a_i}+ \frac{2\mathrm i\sqrt{a_i}}{1+a_i}\right)\left(x+\frac{1-a_i}{1+a_i}-\frac{2\mathrm i\sqrt{a_i}}{1+a_i}\right),\\
2^{2n+1}\widehat{A}_{2n+2}(x)&=(1+x)\prod_{i=1}^n(1+b_i)\left(x+\frac{1-b_i}{1+b_i}+ \frac{2\mathrm i\sqrt{b_i}}{1+b_i}\right)\left(x+\frac{1-b_i}{1+b_i}-\frac{2\mathrm i\sqrt{b_i}}{1+b_i}\right).
\end{align*}
Since
$$\left(\frac{1-a_i}{1+a_i}\right)^2+\left(\frac{2\sqrt{a_i}}{1+a_i}\right)^2=\left(\frac{1-b_i}{1+b_i}\right)^2+\left(\frac{2\sqrt{b_i}}{1+b_i}\right)^2=1,$$
the moduli of all zeros of $\widehat{A}_{n}(x)$ are equal to 1.
Note that $$s_j=-\frac{1-a_j}{1+a_j},~p_j=-\frac{1-b_j}{1+b_j}.$$
So~\eqref{si02} is immediate. Along the same lines, one can get~\eqref{si01}.
\end{proof}

From Theorem~\ref{thm:02}, we immediately get that the sequence of imaginary parts of the zeros of $\widehat{A}_{n}(x)$ also separates that of $\widehat{A}_{n+1}(x)$.


\begin{thebibliography}{9}

\bibitem{Bona12}
M. B\'ona, Combinatorics of Permutations, second edition, CRC Press, Boca Raton, FL, 2012.

\bibitem{Chebikin08}
D. Chebikin, Variations on descents and inversions in permutations, \newblock {\em Electron. J.
Combin.} {15} (2008), \#R132.

\bibitem{Hetyei08}
G. Hetyei, Tchebyshev triangulations of stable simplicial complexes,
\newblock {\em J. Combin. Theory Ser. A} 115 (2008), 569--592.


\bibitem{Hoffman95}
M.E. Hoffman, Derivative polynomials for tangent and secant, \newblock {\em Amer. Math. Monthly} 102 (1995) 23--30.

\bibitem{Gessel14}
I.M. Gessel, Y. Zhuang, Counting Permutations by alternating descents,  \newblock {\em Electron. J.
Combin.} {21(4)} (2014), \#P4.23.

\bibitem{Ma121}
S.-M. Ma, Derivative polynomials and enumeration of permutations by number of interior and left peaks, \newblock {\em Discrete Math.} 312 (2012), 405--412.

\bibitem{Ma122}
S.-M. Ma, An explicit formula for the number of permutations with a given number of alternating runs, \newblock {\em J. Combin. Theory Ser. A} 119 (2012), 1660--1664.

\bibitem{Ma08}
S.-M. Ma, Y. Wang, $q$-Eulerian polynomials and polynomials with only real zeros, \newblock {\em Electron. J. Combin.} 15 (2008), \#R17.

\bibitem{Remmel12}
J.B. Remmel, Generating Functions for Alternating Descents and
Alternating Major Index,  \newblock {\em Ann. Comb.,} {16} (2012), 625--650.


\bibitem{Sloane}
N.J.A. Sloane, \newblock {\em The On-Line Encyclopedia of Integer Sequences},
http://oeis.org.


\bibitem{Stembridge97}
J. Stembridge, \emph{Enriched P-partitions},  \newblock {\em Trans. Amer. Math. Soc.} 349(2): (1997) 763--788.
\end{thebibliography}
\end{document}